\newcommand{\T}{\mathbf{T}}
\newcommand{\Tt}{\tilde{\mathbf{T}}}
\newcommand{\Ter}{\mathcal{T}}
\newcommand{\C}{\mathcal{C}}
\newcommand{\pib}{\mathbf{\pi}}
\newcommand{\tx}{\tilde{x}}
\newcommand{\bSigma}{\mathbf{\Sigma}}
\newcommand{\N}{\mathbb{N}}
\newcommand{\0}{\mathbb{\emptyset}}
\renewcommand{\leq}{\leqslant}
\renewcommand{\geq}{\geqslant}
\newcommand{\SC}{\mathcal{S}}
\newtheorem{theorem}{Theorem}
\newtheorem{lemma}[theorem]{Lemma}
\newtheorem{corollary}[theorem]{Corollary}
\theoremstyle{definition}
\newtheorem*{definition}{Definition}
\newtheorem*{remark}{Remark}
\title{Borel determinacy: a streamlined proof}
\author{Thomas Buffard, Gabriel Levrel, Sam Mayo}
\date{}
\begin{document}

\begin{abstract}
     First proved my Donald Martin in 1975, the result of Borel determinacy has been the subject of multiple revised proofs. Following Martin's book \cite{martin}, we present a recent streamlined proof which implements ideas of Martin, Moschovakis, and Hurkens. We aim to give a concise presentation that makes this proof approachable to a wider audience.
\end{abstract}

\maketitle


We begin by briefly recalling some definitions and notation used in the rest of the article. We are concerned with \textit{Gale-Stewart games}, i.e. infinite two player games of perfect information where the players, denoted I and II, play alternatively. We refer the reader to \cite{dst} for more on the following definitions, as well as motivation for considering infinite games in descriptive set theory.\\

Given a nonempty set $X$, we let
$$X^{<\N}:=\bigcup_{n\in\N}X^n.$$
A \textit{game tree} $T$ is a nonempty subset of $X^{<\N}$ that is $\subseteq$-closed downward, i.e. for all $p,q \in X^{<\N}$, if $q \in T$ and $p \subseteq q$ then $p \in T$. The elements of $T$ are the \textit{positions} in our game, and we call a position $p \in T$ \textit{terminal} if it has no proper extension in $T$, i.e. there does not exist $q \in T$ such that $p \subsetneq q$. We denote by $|p|$ the length of $p$ considered as a finite sequence. Considering $T$ as a rooted tree, we will occasionally refer to the positions that extend $p$ as the \textit{children of} $p$.

A \textit{move} at a position $p \in T$ is an element $a \in X$ such that $p^\frown \langle a \rangle \in T$, where $^\frown$ denotes the concatenation of two sequences.

We denote by $[T]$ the set of infinite branches through $T$, that is,
$$[T]:=\{x\in X^\N:\text{for all }p\subsetneq x,p\in T\}.$$
A \textit{play} $x$ of our game $T$ is an element of $X^{\leq\N}:=X^{<\N}\cup X^\N$ such that either $x$ is terminal in $T$, or $x\in[T]$. We denote by $\lceil T \rceil$ the set of all plays in $T$, and we call $[T]$ the set of infinite plays. A game tree $T$ is called \textit{pruned} if $T$ has no terminal positions, i.e. $\lceil T \rceil =[T]$.

Given a game tree $T$ and a \textit{payoff set} $A \subseteq \lceil T\rceil$, the associated \textit{game}, denoted $G(A;T)$, is the game played according to the game tree $T$ where a play $x$ is a win for player I if $x \in A$ and a win for player II if $x \in \lceil T\rceil\setminus A$.

A \textit{strategy for player I} (resp. II) in $T$ is a function $\sigma$ which maps a nonterminal position $p \in T$ of even length (resp. odd length) to a move $a$ at $p$. We denote the set of strategies for player I by $\SC_\text{I}(T)$ (resp. $\SC_\text{II}(T)$ for II) and we let $\SC(T) := \SC_\text{I}(T) \cup \SC_\text{II}(T)$ be the set of all strategies.

A position $p \in T$ is \textit{consistent} with a strategy $\sigma$ for I (resp. II) if $p(k) = \sigma(p|_{k})$ for all even (resp. odd) $k < |p|$. A play $x \in \lceil T\rceil$ is \textit{consistent} with $\sigma$ if every position $p \subseteq x$ is consistent with $\sigma$.

A strategy $\sigma$ for player I is \textit{winning} if all plays consistent with $\sigma$ are in the payoff set $A$. Similarly, a strategy $\tau$ for player II is winning if all plays consistent with $\tau$ are not in $A$.

\begin{definition}
A game $G(A;T)$ is \textit{determined} if either player has a winning strategy.
\end{definition}

For any position $p \in T$, we define the \textit{game subtree at $p$} as
\[T_p:= \{q \in T: q \subseteq p \text{ or } p \subseteq q\}.\]
$T_p$ defines a game which is played the same as $T$ but with the first $|p|$ moves fixed.

We equip $\lceil T\rceil$ with the topology whose basic open sets are the sets $\lceil T_p\rceil$ for all $p \in T$ and we say that a game $G(A;T)$ is open, closed, Borel, etc. if $A$ is open, closed, Borel, etc.\\

In \cite{martin}, Martin introduced the notion of a game with taboos. They are the same games defined above, except that all finite plays are declared losing, or \textit{taboo}, for either player, independently of the payoff set $A$.

\begin{definition}
    A game tree $\T$ with taboos is a triple $\langle T, \Ter_\text{I}, \Ter_\text{II} \rangle$ where 
    \begin{itemize}
        \item $T$ is game tree,
        \item $\Ter_\text{I}$ and $\Ter_\text{II}$ form a partition of $\lceil T \rceil \setminus [T]$.
    \end{itemize}
\end{definition}

Positions, moves, plays, and strategies in a game tree with taboos $\T$ are exactly the positions, moves, plays, and strategies in $T$. For a position $p\in T$, the \textit{game subtree} at $p$ is $\T_p:=\langle T_p,\Ter_\text{I}\cap \lceil T_p\rceil,\Ter_\text{II}\cap \lceil T_p\rceil\rangle$. In the rest of this paper, $\T$ will denote an arbitrary game tree with taboos. 

\begin{definition}
    For any game tree with taboos $\T$ and payoff set $A \subseteq [T]$, the associated game with taboos $G(A;\T)$ is the game played according to $T$ where a play $x \in \lceil T\rceil$ is a win for player I if and only if $x \in A$ or $x$ is taboo for II, and a win for II otherwise. That is, $G(A;\T)$ is the same game as $G(A\cup\Ter_\text{II};T)$.
\end{definition}

We equip $[T]$ the subspace topology, considering $[T]$ as a subspace of $\lceil T\rceil$. Since the payoff set $A$ is defined to be a subset of $[T]$, we say that the game $G(A;\T)$ is open, closed, Borel, etc. if $A$ is open, closed, Borel, etc., as a subset of $[T]$.

Games with taboos might seem like a redundant notion, since they model exactly the same games as the ordinary ones we defined first. The point is that games with and without taboos differ topologically, since a priori, $A$ could have a different Borel complexity as a subset of $\lceil T\rceil$ than as a subset of $[T]$. While this turns out not to be the case unless $A$ is open \cite[Lemma 2.1.1]{martin}, the following lemma shows that any potential topological differences between games with or without taboos is irrelevant from the standpoint of determinacy results.
\begin{lemma}\label{levelbylevel}
The determinacy of games with taboos is level-by-level (of the Borel hierarchy) equivalent to the determinacy of infinite games without taboos (i.e. where the game tree is pruned).

\end{lemma}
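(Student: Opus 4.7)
The plan is to prove each direction of the equivalence at each Borel level separately. Write $\Gamma \in \{\Sigma^0_\alpha, \Pi^0_\alpha\}$ for the generic level in question.

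The easier direction is that $\Gamma$-determinacy for games with taboos implies $\Gamma$-determinacy for games without taboos. Indeed, a pruned game $G(A;T)$ is literally the game with taboos $G(A;\langle T, \emptyset, \emptyset\rangle)$; the empty partition is valid because $\lceil T\rceil\setminus[T]=\emptyset$ when $T$ is pruned. So the hypothesis applies without touching $A$.

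For the reverse, starting from an arbitrary $G(A;\T)$ with $A$ at level $\Gamma$ in $[T]$, I would build a pruned game $G(A';T')$ at the same level $\Gamma$ in $[T']$ with the same determined player. Fix any $a \in X$ and set $T' := T \cup \{p \conc \langle a\rangle^n : p \in \lceil T\rceil\setminus[T],\ n \geq 1\}$, appending a forced $a$-tail at every terminal position. Let $A' := A \cup \{p\conc\langle a\rangle^\omega : p \in \Ter_\text{II}\}$, so the new tails emerging from $\Ter_\text{II}$ positions are wins for I, matching the original verdict. Strategies lift in both directions: an extension is canonical because the tail moves are forced, and restriction to $T$ is immediate. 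Under this lifting, a consistent play of $T'$ is either an infinite branch in $[T]$ (scored by $A$, since $A' \cap [T] = A$) or a forced tail (scored by $A' \cap ([T']\setminus[T])$, which encodes exactly the original taboo verdict), so the two games are determined together.

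The Borel level comparison then reduces to topology. Each added branch $p\conc\langle a\rangle^\omega$ is an isolated point of $[T']$, since the basic open $\lceil T'_{p\conc\langle a\rangle}\rceil$ is the singleton $\{p\conc\langle a\rangle^\omega\}$ (no moves remain beyond $p\conc\langle a\rangle$ except $a$); hence $[T']\setminus[T]$ is open and $[T]$ is closed in $[T']$. One half of the level comparison is now immediate from the subspace characterization: $A = A' \cap [T]$, so if $A'$ is at level $\Gamma$ in $[T']$ then $A$ inherits that level in $[T]$. For the converse, write $A = B\cap[T]$ for some $B$ of level $\Sigma^0_\alpha$ in $[T']$; then $A' = (B\cap[T]) \cup O$ with $O := \{p\conc\langle a\rangle^\omega : p \in \Ter_\text{II}\}$ open in $[T']$. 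For $\alpha \geq 2$, $\Sigma^0_\alpha$ absorbs both intersection with a closed set and union with an open set, so $A'$ stays at level $\Sigma^0_\alpha$, and the $\Pi^0_\alpha$ case dualizes by exchanging $\Ter_\text{II}$ with $\Ter_\text{I}$. The main delicacy I anticipate is the $\alpha = 1$ case, where terminal positions in $\Ter_\text{I}$ can accumulate on an infinite branch of $[T]$ and prevent $A'$ from being open; but this is not a genuine obstruction, since open (and closed) determinacy holds unconditionally via Gale--Stewart, so the equivalence at level $1$ is trivial on both sides.
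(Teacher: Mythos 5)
Your construction is correct in its architecture but takes a genuinely different route from the paper. The paper prunes $T$ from the inside: it calls a position \emph{taboo-determined} if one player can force the play into a taboo for the other, deletes all such positions to obtain a pruned subtree $T'\subseteq T$, and pastes the taboo-strategies onto a strategy for $G(A\cap[T'];T')$. Since $[T']$ is then a subspace of $[T]$, the complexity of $A\cap[T']$ is at most that of $A$ at \emph{every} level, including open and closed. You instead prune from the outside, grafting a forced tail $p\conc\langle a\rangle^n$ onto each terminal position; the strategy correspondence then becomes a triviality and no taboo-strategy analysis is needed, which is a real simplification. The price is paid exactly at level $1$: as you observe, $A'$ is in general only of the form $(\text{open}\cap\text{closed})\cup\text{open}$, so your reduction preserves the class only for $\alpha\ge 2$.

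The level-$1$ fallback is the one place you are too quick, and it matters because level $1$ is the only instance of this lemma the paper actually invokes (Corollary \ref{unraveldet} needs clopen determinacy for games with taboos). The Gale--Stewart theorem as usually stated concerns payoffs open in $\lceil T\rceil$, or open payoffs on pruned trees; a set open in $[T]$ need not become open in $\lceil T\rceil$ after the opponent's taboos are adjoined, and the naive unfolding argument breaks precisely because a position $q$ with $[T_q]\subseteq A$ need not be winning for I --- player II may still be able to force the play into $\Ter_\text{I}$ from $q$. Open and closed determinacy for games with taboos is indeed true and elementary (it amounts to determinacy of a finite Boolean combination of open sets, e.g.\ via your own reduction followed by determinacy of $(\text{open}\cap\text{closed})\cup\text{open}$ payoffs on the pruned $T'$), but it is not a bare citation to Gale--Stewart, and supplying it honestly requires essentially the same ``who can force a taboo'' analysis that the paper's proof carries out. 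With that step filled in, your argument is complete.
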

\begin{proof}
    In one direction, if $G(A;T)$ is an infinite game without taboos, i.e. $\lceil T\rceil=[T]$, then we can consider $T$ as a game tree with taboos, and the resulting game with taboos is completely identical. Hence the determinacy of games with taboos implies that of infinite games without taboos. The other direction is less trivial:
    
    Let $G(A;\T)$ be a game with taboos with Borel payoff set $A\subseteq[T]$. We call a strategy a \textit{taboo-strategy} if every play consistent with it is taboo for the opposite player, and we call a position $p\in T$ \textit{taboo-determined} for player I (resp. II) if I (resp. II) has a taboo-strategy for $\T_p$. Let $T^*$ denote the set of positions in $\T$ that are taboo-determined for either player, and let $T':=T\setminus T^*$.
    
    First, we claim that $T'$ a pruned game tree, i.e. $\lceil T'\rceil = [T']$. Indeed, the children of a taboo-determined position are also taboo-determined, so $T'$ is closed downward under $\subseteq$ and thus a game tree. Suppose toward a contradiction that $p\in\lceil T'\rceil$ is a finite play. Then every position extending $p$ in $T$ must be taboo-determined and so $p$ must be taboo-determined, a contradiction since $p\in T\setminus T^*$.
    
    We'll show that the determinacy of $G(A\cap [T'];T')$ implies that of $G(A;\T)$. Since $A\cap [T']$ is as simple, topologically, as $A$, this will give our sought after level-by-level equivalence.
    
    Let $\sigma' \in \SC_\text{I}(T')$ be a strategy for player I. We use $\sigma'$ to construct to a strategy $\sigma\in\SC_\text{I}(T)$. There are two cases to consider: (a) player II moves to a position $p \in T'$ or (b) player II moves to some $q \in T^*$ for the first time. In (a), $\sigma$ will follow the strategy defined by $\sigma'$. In (b) there are two further sub-cases: if $q$ is taboo-determined for I, then $\sigma$ follows the taboo-strategy defined for $\T_q$. If $q$ is taboo-determined for II, then the parent of $q$ must also be taboo-determined for II. This contradicts the minimality of $q$ in $T^*$, so this sub-case never happens.
    
     It follows that if player I has a winning strategy $\sigma'$ in $G(A\cap [T'];T')$, then $\sigma$ is winning strategy in $G(A;\T)$. Finally, an identical argument shows the same thing for player II.
\end{proof}


Equipped with our definition and basic facts about games with taboos, we proceed with defining a covering. As usual, $\T$ is a fixed game tree with taboos.

\begin{definition}
    A \textit{covering} of $\T$ is a triple $\langle\Tt,\pi,\phi\rangle$, of a game tree with taboos $\Tt$, a \textit{position map} $\pi:\tilde{T}\to T$ such that for all $\tilde{p},\tilde{q}\in\Tt$,
\begin{enumerate}[label=(\roman*)]
    \item $\tilde{p}\subseteq\tilde{q}\rightarrow \pi(\tilde{p})\subseteq\pi(\tilde{q})$,
    \item $|\pi(\tilde{p})|= |\tilde{p}|$,
    \item $\pi(\tilde{p})\in\mathcal{T}_\text{I}\rightarrow\tilde{p}\in\tilde{\mathcal{T}}_\text{I}$,
    \item $\pi(\tilde{p})\in\mathcal{T}_\text{II}\rightarrow\tilde{p}\in\tilde{\mathcal{T}}_\text{II}$,
\end{enumerate}
and a \textit{strategy map} $\phi:\mathcal{S}(\tilde{T})\to\mathcal{S}(T)$ such that for all $\tilde{\sigma} \in \mathcal{S}(\tilde{T})$,
\begin{enumerate}[label=(\roman*)]
    \item $\phi(\tilde{\sigma})$ is a strategy for the same player as $\tilde{\sigma}$,
    \item the restriction of $\phi(\tilde{\sigma})$ to positions of length $< n$ depends only on the restriction of $\tilde{\sigma}$ to positions of length $< n$, for $n \in \N$.
\end{enumerate}
Additionally, we require that $\phi$ satisfies the following \textit{lifting condition}: for every $\tilde{\sigma}\in\mathcal{S}(\tilde{T})$ and $x\in\lceil T\rceil$ consistent with $\phi(\tilde{\sigma})$, there is an $\tilde{x}\in\lceil\tilde{T}\rceil$ such that
\begin{enumerate}
    \item[(i)] $\tx$ is consistent with $\tilde{\sigma}$,
    \item[(ii)] $\pi(\tx)\subseteq x$,
    \item[(iii)] either $\pi(\tx)=x$ or $\tx$ is taboo for the player for whom $\tilde{\sigma}$ is a strategy.
\end{enumerate}
\end{definition}

By \cite[Prop. 2.9(a)]{dst}, the conditions for the position map imply that $\pi$ induces a continuous function $[\tilde{T}]\to[T]$. We will abuse notation and use $\pi$ to refer to this function as well.

We call $\tilde{x}$ as above the \textit{lift of $x$ along $\tilde{\sigma}$}. The importance of this condition is illuminated by Lemma \ref{liftstrat}. In particular, coverings make no reference to a payoff set, but the lifting property ensures that $\phi$ maps winning strategies to winning strategies regardless of what payoff set we choose.

\begin{lemma}\label{liftstrat}
    Let $\C=\langle \Tt,\pi,\phi\rangle$ be a covering of $\T$. For all $A\subseteq[T]$, if $\tilde{\sigma}$ is a winning strategy in $G(\pib^{-1}(A);\Tt)$, then $\sigma:=\phi(\tilde{\sigma})$ is a winning strategy for the same player in $G(A;\T)$.
\end{lemma}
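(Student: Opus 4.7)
The plan is to fix the player (take I, with II symmetric) and verify that every play $x\in\lceil T\rceil$ consistent with $\sigma:=\phi(\tilde{\sigma})$ is a win for I in $G(A;\T)$, i.e.\ satisfies $x\in A\cup\Ter_\text{II}$. The main tool is the lifting condition, which supplies an $\tx\in\lceil\Tt\rceil$ consistent with $\tilde{\sigma}$ such that $\pi(\tx)\subseteq x$ and either $\pi(\tx)=x$ or $\tx$ is taboo for I. Since $\tilde{\sigma}$ wins in $G(\pi^{-1}(A);\Tt)$, the play $\tx$ is a win for I there; in particular $\tx\notin\tilde{\Ter}_\text{I}$.

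I would first exploit this last observation to rule out the case $\pi(\tx)\subsetneq x$: the second alternative in clause (iii) of the lifting condition would force $\tx\in\tilde{\Ter}_\text{I}$, contradicting that $\tilde{\sigma}$ wins at $\tx$. So $\pi(\tx)=x$, and I split according to whether $x$ is infinite or terminal. If $x\in[T]$, the length equation $|\tx|=|\pi(\tx)|=|x|$ forces $\tx\in[\Tt]$; being a winning infinite play, it lies in $\pi^{-1}(A)$, and so $x=\pi(\tx)\in A$. If instead $x$ is terminal, then $x\in\Ter_\text{I}\cup\Ter_\text{II}$, and conditions (iii)--(iv) for the position map transfer the taboo status from $\pi(\tx)$ to $\tx$; hence $x\in\Ter_\text{I}$ would force $\tx\in\tilde{\Ter}_\text{I}$, again impossible, leaving $x\in\Ter_\text{II}$, which is a terminal win for I.

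The argument for player II is completely dual, swapping the roles of I and II throughout. There is no serious obstacle: the whole content of the lemma is packaged inside the lifting condition, and the proof reduces to a short case analysis across the two alternatives in clause (iii) and the dichotomy infinite/terminal for $x$. The one subtlety worth flagging is that conditions (iii)--(iv) for $\pi$ are one-way implications running from $\pi(\tx)$ down to $\tx$, which is precisely the direction I need since I have already established $\pi(\tx)=x$ before invoking them.
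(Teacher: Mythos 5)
Your proposal is correct and follows essentially the same argument as the paper: lift $x$ along $\tilde{\sigma}$, use that $\tx$ is a win for I to rule out $\tx\in\tilde{\Ter}_\text{I}$ and hence force $\pi(\tx)=x$, then split into the infinite case ($\tx\in\pi^{-1}(A)$ gives $x\in A$) and the terminal case ($x\in\Ter_\text{II}$). Your handling of the terminal case is in fact slightly more careful than the paper's, since you correctly note that the one-way implications (iii)--(iv) must be applied from $x$ to $\tx$ together with the partition of finite plays into $\Ter_\text{I}$ and $\Ter_\text{II}$.
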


\begin{proof}

Assume that $\tilde{\sigma}$ is a winning strategy for player I (the argument for II is identical). Let $x$ be a play in $T$ consistent with $\sigma$. We must show that $x$ is a win for I in $G(A;\T)$, i.e. either $x$ is taboo for II, or $x\in A$.

Let $\tilde{x}\in\lceil \tilde{T}\rceil$ be the lift of $x$ along $\tilde{\sigma}$. Then $\tx$ is consistent with $\tilde{\sigma}$, so it is a win for I in $G(\pi^{-1}(A);\Tt)$. In particular, $\tx$ is not taboo for I, hence by (iii) of the lifting condition, $\pi(\tx)=x$. Then either $\tx$ is taboo for II, and so $x$ is as well, or $\tx\in\pi^{-1}(A)$, in which case $x\in A$.
\end{proof}

We say that a covering $\langle \Tt,\pi,\phi\rangle$ of $\T$ \textit{unravels} a set $A\subseteq[T]$ if $\pi^{-1}(A)$ is clopen (closed and open) in $[\tilde{T}]$.

\begin{corollary}\label{unraveldet}
    If there is a covering of $\T$ that unravels $A\subseteq[T]$, then $G(A;\T)$ is determined.
\end{corollary}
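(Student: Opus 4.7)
The plan is to reduce, via the covering, the determinacy of $G(A;\T)$ to that of a clopen game on $\Tt$, and then invoke classical open determinacy.

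First, I would apply Lemma \ref{liftstrat}: it suffices to exhibit a winning strategy for one of the two players in $G(\pi^{-1}(A);\Tt)$, since the map $\phi$ will transport any such $\tilde{\sigma}$ to a winning strategy for the same player in $G(A;\T)$. Second, because $\pi^{-1}(A)$ is clopen in $[\tilde{T}]$ by hypothesis, the game $G(\pi^{-1}(A);\Tt)$ is a clopen---hence open---game with taboos. Its determinacy is standard: by Lemma \ref{levelbylevel} it reduces to the determinacy of open games on pruned game trees, which is the classical Gale-Stewart theorem (a direct rank argument on positions from which one player has a winning quasi-strategy also works in a few lines). Putting the two steps together, some player has a winning strategy $\tilde{\sigma}$ in $G(\pi^{-1}(A);\Tt)$, and then $\phi(\tilde{\sigma})$ wins $G(A;\T)$ for the same player.

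The corollary carries essentially no independent content beyond this packaging; all the substantive work lives in Lemma \ref{liftstrat} and in open determinacy. The only point deserving a brief check is that open determinacy is available in the taboo setting---but this is immediate from Lemma \ref{levelbylevel}---so there is no real obstacle here, and the entire proof should be only a few lines.
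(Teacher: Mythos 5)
Your proposal is correct and follows exactly the paper's own route: Lemma \ref{liftstrat} to pull a winning strategy back along $\phi$, plus Lemma \ref{levelbylevel} applied to clopen (Gale--Stewart) determinacy to get a winning strategy in $G(\pi^{-1}(A);\Tt)$ in the first place. Nothing to add.
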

\begin{proof}
    Follows from Lemma \ref{liftstrat}, and Lemma \ref{levelbylevel} applied to clopen determinacy for infinite games (see e.g. \cite[Theorem 13.17]{dst}).
\end{proof}

\begin{remark}
    If a covering unravels $A\subseteq[T]$, then the same covering unravels the complement $[T]\setminus A$.
\end{remark}

We call a covering $\C=\langle \Tt,\pi,\phi\rangle$ of $\T$ a $k$\textit{-covering of} $\T$ if the first $k$ levels of $T$ and $\tilde{T}$ are the same, and $\pi$, $\phi$ are the identity map up to level $k$.

Let $\C_1=\langle \Tt_1,\pi_1,\phi_1\rangle$ be a $k_1$-covering of $\T_0$ and $\C_2=\langle \Tt_2,\pi_2,\phi_2\rangle$ be a $k_2$-covering of $\T_1$. The composition $\C_1 \circ \; \C_2$ is defined to be the $\min\{k_1, k_2\}$-covering $$\langle \T_2, \pi_1 \circ \pi_2, \phi_1 \circ \phi_2 \rangle.$$

In the proof of Borel determinacy (Theorem \ref{maininduct}) we will be faced with a countable system of coverings $\C_j=\langle \T_j, \pi_{j, i}, \phi_{j, i}\rangle$, such that each subsequent covering unravels a different part of the original game tree. The following lemma constructs a certain inverse limit that lets us simultaneously cover each of these game trees.
\begin{lemma}\label{invlim}
    Let $k\in\N$, and let $((\T_i)_{i\in\N},(\C_{j,i})_{i\leq j\in\N})$ be an inverse system of trees and $k$-coverings, where each $\T_i$ is a game tree with taboos, each $\C_{j,i}=\langle \T_j,\pi_{j,i},\phi_{j,i}\rangle$ is a $k$-covering of $\T_i$, and $\C_{i_3,i_1}=\C_{i_2,i_1}\circ\C_{i_3,i_2}$ for all $i_1\leq i_2\leq i_3$. Suppose moreover that for all $n\in\N$, the first $n$ levels of the $\T_i$'s eventually stabilize, i.e. there exists some $i_n$ such that for all $j'\geq j\geq i_n$, $\C_{j',j}$ is an $n$-covering.
    
    Then there exists a game tree $\T_\infty$ and $k$-coverings $\C_{\infty, i} := \langle \T_{\infty}, \pi_{\infty, i}, \phi_{\infty, i} \rangle$ of $\T_i$ for all $i$, such that $\C_{\infty,i}=\C_{j,i}\circ\C_{\infty,j}$ for all $i\leq j$.
\end{lemma}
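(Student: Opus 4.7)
The plan is to build $\T_\infty$ as the level-wise stabilization of the $\T_j$'s, read off $\pi_{\infty,i},\phi_{\infty,i}$ from $\pi_{j,i},\phi_{j,i}$ at a sufficiently large $j$, and then verify the lifting condition via the lifts produced by the $\C_{j,i}$. Without loss of generality assume $(i_n)_n$ is strictly increasing. I would declare the $n$-th level of $T_\infty$, together with its taboo classification into $\Ter_{\infty,\text{I}}$ and $\Ter_{\infty,\text{II}}$, to be the common $n$-th level of all $T_j$ for $j\geq i_n$; this is unambiguous because $\C_{j',j}$ is an $n$-covering whenever $j'\geq j\geq i_n$, hence acts as the identity on those levels (in particular preserving terminality and taboo labels). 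For $\tilde{p}\in T_\infty$ of length $n$, I would set $\pi_{\infty,i}(\tilde{p}):=\pi_{j,i}(\tilde{p})$ for any $j\geq\max(i,i_n)$, well-defined by the cocycle $\pi_{j',i}=\pi_{j,i}\circ\pi_{j',j}$ combined with $\pi_{j',j}$ being the identity on the first $n$ levels. For $\tilde{\sigma}\in\SC(\T_\infty)$ and $p\in T_i$ of length $m$, I would pick $j\geq\max(i,i_{m+1})$ and any $\tilde{\sigma}_j\in\SC(\T_j)$ whose restriction to length-$\leq m$ positions agrees with $\tilde{\sigma}$ (possible because those positions lie in the stable common part of $T_j$ and $T_\infty$), and set $\phi_{\infty,i}(\tilde{\sigma})(p):=\phi_{j,i}(\tilde{\sigma}_j)(p)$; strategy-map condition (ii) combined with the $n$-covering property guarantees independence of $j$ and of the extension $\tilde{\sigma}_j$.

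The position-map conditions (i)--(iv), strategy-map conditions (i)--(ii), and the composition law $\C_{\infty,i}=\C_{j,i}\circ\C_{\infty,j}$ all reduce to the corresponding statements for the $\C_{j,i}$ and are immediate from the definitions.

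The substantive step, and the main obstacle, is the lifting condition. Given $\tilde{\sigma}$ and $x\in\lceil T_i\rceil$ consistent with $\sigma:=\phi_{\infty,i}(\tilde{\sigma})$, for each $j\geq i$ let $n_j:=\max\{n:i_n\leq j\}$ (so $n_j\to\infty$) and pick $\tilde{\sigma}_j\in\SC(\T_j)$ agreeing with $\tilde{\sigma}$ on positions of length $\leq n_j$; then $\sigma_j:=\phi_{j,i}(\tilde{\sigma}_j)$ agrees with $\sigma$ on positions of length $<n_j$. Extend $x|_{n_j}$ to a play $y_j\in\lceil T_i\rceil$ consistent with $\sigma_j$ (playing $\sigma_j$ on our turns and any valid move for the opponent). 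The lifting property of $\C_{j,i}$ produces $\tilde{y}_j\in\lceil T_j\rceil$ consistent with $\tilde{\sigma}_j$, with $\pi_{j,i}(\tilde{y}_j)\subseteq y_j$, and either equality or $\tilde{y}_j$ taboo for the player of $\tilde{\sigma}$. Truncating $\tilde{y}_j$ to length $\min(n_j,|\tilde{y}_j|)$ yields $\tilde{p}_j$ in the downward-closed subtree $S:=\{\tilde{p}\in T_\infty:\tilde{p}\text{ consistent with }\tilde{\sigma},\ \pi_{\infty,i}(\tilde{p})\subseteq x\}$. If some $\tilde{p}_j$ is a taboo terminal of $\T_\infty$ for the player of $\tilde{\sigma}$, take $\tx:=\tilde{p}_j$; otherwise the $\tilde{p}_j$ have unbounded length in $S$, and a dependent-choice argument---at each stage invoking the same construction applied to the induced sub-coverings of $(\T_i)_{x|_{|\tilde{q}|}}$ by $(\T_j)_{\tilde{q}}$ to produce a one-step extension at the partial lift $\tilde{q}$---extracts an infinite branch $\tx$ of $S$. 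I expect the hardest point to be organizing this inductive extension cleanly: specifically, one must verify that the sub-coverings at a partial lift again satisfy the hypotheses of the lemma, so that the dependent-choice step can reuse the same lifting mechanism at every level.
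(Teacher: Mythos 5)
Your construction of $\T_\infty$ and of the maps $\pi_{\infty,i}$, $\phi_{\infty,i}$, together with the verification of the algebraic conditions, matches the paper's proof. The gap is in the lifting condition, which you rightly flag as the substantive step but do not close. Your finite approximations $\tilde{p}_j$ are produced by (a) extending $\tilde{\sigma}$ restricted to the stable levels to an \emph{arbitrary} full strategy $\tilde{\sigma}_j$ on $\T_j$, (b) extending $x|_{n_j}$ to an \emph{arbitrary} play $y_j$ consistent with $\phi_{j,i}(\tilde{\sigma}_j)$, and (c) lifting $y_j$ and truncating. Because of the arbitrary choices in (a) and (b), the $\tilde{p}_j$ for different $j$ need not be compatible with one another: they are elements of unbounded length in the tree $S$, but the game trees here are not finitely branching (in the intended application the moves $\langle a,X\rangle$ range over arbitrary subsets of a set of positions), so K\"onig's lemma is unavailable and ``unbounded length in $S$'' does not yield an infinite branch. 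Your proposed repair---dependent choice through ``induced sub-coverings'' at each partial lift $\tilde{q}$---is exactly where the real content would have to live, and it faces two unaddressed problems: the restriction of a covering to a game subtree is not obviously a covering (the lift of a play through $\pi(\tilde{q})$ guaranteed by the lifting condition need not pass through $\tilde{q}$, so the localized lifting condition can fail), and no invariant is identified that guarantees the one-step extension chosen at $\tilde{q}$ can itself be extended at the next stage.

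The paper sidesteps all of this by never introducing the auxiliary objects $\tilde{\sigma}_j$ and $y_j$: once $\phi_{\infty,i_n}$ has been defined, $\phi_{\infty,i_n}(\tilde{\sigma})$ is already a genuine full strategy on $\T_{i_n}$ with $\phi_{i_n,j}\bigl(\phi_{\infty,i_n}(\tilde{\sigma})\bigr)=\phi_{\infty,j}(\tilde{\sigma})$, so $x$ \emph{itself} is consistent with the pushed-down strategy and can be lifted along $\phi_{\infty,i_n}(\tilde{\sigma})$ via $\C_{i_n,j}$; the truncation of that lift to level $n$ is taken as $\tilde{x}|_n$. That is the move your sketch is missing, and it eliminates both sources of incompatibility at once. (The paper's own write-up is admittedly terse about why these truncations cohere across different $n$, but it at least removes the arbitrary extensions that make your approximations genuinely non-coherent and force you into the branch-extraction problem.)
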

\begin{proof}
    Let $_n\T_j$ denote the restriction of $\T_j$ to its first $n$ levels, and let $i_n$ be as above. For all $n$, we set $_n\T_\infty:={_n\T_{i_n}}$. Thus we identify positions (and strategies) up to level $n$ in $\T_\infty$ with those in $\T_{i_n}$. This is well-defined and independent of choice of $i_n$ by assumption.
    
    We define $\pi_{\infty,j}$ and $\phi_{\infty,j}$ by their restrictions to the first $n$ levels of $\T_\infty$. Specifically, for $x\in {_n\T_\infty}$, we set $\pi_{\infty,j}(x):=x$ if $j\geq i_n$, and otherwise set $\pi_{\infty,j}(x)=\pi_{i_n,j}(x)$. Similarly, for a strategy $\sigma$ in $_n\T_\infty$, set $\phi_{\infty,j}(\sigma)=\sigma$ if $j\geq i_n$, and set $\phi_{\infty,j}(\sigma)=\phi_{i_n,j}(\sigma)$ otherwise. Since, by assumption, $i_k=0$, $\pi_{\infty,j}$ and $\phi_{\infty,j}$ both satisfy the conditions of a $k$-covering. It is also clear that $\pi_{\infty,i}=\pi_{j,i}\circ\pi_{\infty,j}$ and $\sigma_{\infty,i}=\sigma_{j,i}\circ\sigma_{\infty,j}$ for all $i\leq j$. It remains to show that $\C_{\infty,j}$ satisfies the lifting condition:
    
    Let $\tilde{\sigma}$ be a strategy in $\T_\infty$, and let $x\in\lceil T_j\rceil$ be consistent with $\phi_{\infty,j}(\tilde{\sigma})$. We define $\tilde{x}$, the lift of $x$ along $\tilde{\sigma}$, by its restriction to the first $n$ levels of $\T_\infty$, for all $n\in\N$. First, if $j\geq i_n$, then $\pi_{\infty,j}|_n$ and $\phi_{\infty,j}|_n$ are the identity, so we can set $\tilde{x}|_n=x|_n$. Next, if $j<i_n$, let $y\in\lceil T_{i_n}\rceil$ be the lift of $x$ along $\phi_{\infty,i_n}(\tilde{\sigma})$, via the covering $\C_{i_n,j}$. Again, $\pi_{\infty,i_n}|_n$ and $\phi_{\infty,i_n}|_n$ are the identity by assumption, so we set $\tx|_n=y|_n$. It now follows from the lifting condition for $\C_{i_n,j}$ that $\tx$ is a valid lift of $x$.
\end{proof}

Note that the condition above that each level of the trees stabilize is precisely the reason we have considered $k$-coverings (and not just coverings).

We now state and prove the main theorem:

\begin{theorem}\label{maininduct}
    Let $\T$ be a game tree with taboos, and let $k\in\N$. If $A\subseteq[T]$ is Borel, then there exists a $k$-covering of $\T$ that unravels $A$.
\end{theorem}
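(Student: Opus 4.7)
My plan is to prove the theorem by transfinite induction on the Borel rank of $A$. For each countable ordinal $\alpha\geq 1$, let $P(\alpha)$ be the statement: for every game tree with taboos $\T$, every $k\in\N$, and every $\mathbf{\Sigma}^0_\alpha$ set $A\subseteq[T]$, there exists a $k$-covering of $\T$ that unravels $A$. Since a covering unravels $A$ if and only if it unravels $[T]\setminus A$ (by the remark after Corollary~\ref{unraveldet}), $P(\alpha)$ simultaneously handles $\mathbf{\Pi}^0_\alpha$ sets.

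The base case $\alpha=1$ (open, equivalently closed, payoff sets) is the technical heart of the proof and will be the main obstacle. The idea is to construct $\Tt$ by inserting, immediately after level $k$, an auxiliary move in which the player whose turn it is commits to a ``quasi-strategy'' subtree of $T$ that dictates how the closed set $[T]\setminus A$ is to be navigated from that point on. The position map $\pi$ forgets these auxiliary moves, while $\phi$ translates a strategy $\tilde{\sigma}$ in $\Tt$ to a strategy in $T$ by internally simulating play against the committed quasi-strategy. Verifying the lifting condition reduces to a König-type compactness argument on $[T]\setminus A$; plays in $\Tt$ that diverge from the committed quasi-strategy are declared taboo for the committing player, and it is precisely this escape clause that makes condition~(iii) of the lifting property achievable. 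This is Martin's central construction, and it is where the taboo framework earns its keep.

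For the inductive step, assume $P(\beta)$ holds for all $\beta<\alpha$, and write $A=\bigcup_{n\in\N}A_n$ with each $A_n$ of strictly lower Borel rank. I would build recursively an inverse system $\T_0=\T,\T_1,\T_2,\dots$ of game trees with taboos together with $k_j$-coverings $\C_{j+1,j}$ of $\T_j$, choosing $k_j\geq\max(k,j)$ so the level-stabilization hypothesis of Lemma~\ref{invlim} is satisfied. At stage $j$ the set $\pi_{j,0}^{-1}(A_j)\subseteq[T_j]$ has the same Borel class as $A_j$ by continuity of $\pi_{j,0}$, so the inductive hypothesis supplies a $k_j$-covering of $\T_j$ unraveling it; composing with what has already been built gives $\C_{j+1,0}$. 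Lemma~\ref{invlim} then produces a single inverse limit $\T_\infty$ with a $k$-covering $\C_{\infty,0}$ of $\T$ under which every $\pi_{\infty,0}^{-1}(A_n)$ is clopen, and therefore $\pi_{\infty,0}^{-1}(A)=\bigcup_n\pi_{\infty,0}^{-1}(A_n)$ is open in $[T_\infty]$. A final application of the base case to $\T_\infty$ and this open set, composed with $\C_{\infty,0}$, yields the desired $k$-covering of $\T$ unraveling $A$.
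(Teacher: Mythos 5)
Your inductive step is exactly the paper's argument: the same transfinite induction, the same decomposition $A=\bigcup_n A_n$ into lower-rank pieces handled via the complement remark, the same recursive construction of an inverse system with increasing stabilization parameters feeding into Lemma~\ref{invlim}, and the same final application of the $\alpha=1$ case to the resulting open set $\pi_{\infty,0}^{-1}(A)$ followed by composition. That part is complete and correct as written.

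The gap is the base case, which you correctly identify as the crux but only sketch. The paper isolates it as Lemma~\ref{basecase} and proves it with a concrete construction: after level $k$, player I plays $\langle a,X\rangle$ where $X$ is a subset of the set $Z$ of minimal non-terminal positions $q$ with $[\T_q]\cap A=\emptyset$, and II either accepts (the game is then cut off, with a taboo assigned, whenever a position of $Z$ is reached) or challenges some $r\in X$ (the game is then confined to $\T_r$); the strategy map is built by case analysis on accept/challenge, and the delicate point is defining $\phi(\tilde\tau)$ for player II via the set $Y$ of positions $\tilde\tau$ never challenges. Your sketch (commit to a quasi-strategy, forget it under $\pi$, simulate under $\phi$) points at the right family of constructions, but it is not a proof, and one detail is actively misleading: no K\"onig-type compactness is available or needed here. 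The move set $X$ is an arbitrary nonempty set, so $T$ need not be finitely branching; the clopenness of $\pi^{-1}(A)$ follows purely from $A$ being closed (if every finite initial segment $q$ of a play $x$ satisfies $[\T_q]\cap A\neq\emptyset$, then every basic neighborhood of $x$ meets $A$, so $x\in\overline{A}=A$), and the lifting condition is verified by direct case analysis rather than any compactness argument. To complete your proof you would need to carry out this construction (or an equivalent one) in full, including the verification of the lifting condition for both players.
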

\begin{proof}
    We prove the following by induction on countable ordinals $\alpha\geq1$:\\
    
    \textit{$(\dagger)_\alpha$ For all $A\subseteq[T]$ such that $A\in\bSigma_\alpha^0$, and for all $k\in\N$, there is a $k$-covering of $\T$ that unravels $A$.}\\

    We postpone the proof of $(\dagger)_1$ to Lemma \ref{basecase}. Now let $\alpha>1$ and assume by induction that $(\dagger)_\beta$ holds for all $1\leq\beta<\alpha$. Let $A\subseteq[T]$ with $A\in\bSigma_\alpha^0$. Then $A=\bigcup_{i\in\N}B_i$ where $B_i\in\mathbf{\Pi}_{\beta_i}^0$ for some $\beta_i<\alpha$. We define an inverse system $((\T_i)_{i\in\N},(\C_{j,i})_{i\leq j\in\N})$ of trees and coverings that successively unravels the sets $B_i$:
    
    Let $\T_0:=\T$, with $\C_{0,0}$ the identity. For $n\in\N$ suppose that we have defined $\T_{j}$ and $\C_{j,i}$ for all $i\leq j\leq n$. By $(\dagger)_{\beta_n}$, let $\C_{n+1,n}=\langle\T_{n+1},\pi_{n+1,n},\phi_{n+1,n}\rangle$ be a $(k+n)$-covering of $\T_n$ that unravels $\pi^{-1}_{n,0}(B_n)$. We let $\C_{n+1,n+1}$ be the identity, and for $j<n+1$ we set $\C_{n+1,j}=\C_{n,j}\circ\C_{n+1,n}$.

    By construction, each $\C_{j,i}$ is a $(k+i)$-covering of $\T_i$. Hence the conditions of Lemma \ref{invlim} are satisfied, so we let $\T_\infty$ and $(\C_{\infty,i})_{i\in\N}$ be the inverse limit constructed by that lemma. By continuity, $\pi_{\infty,0}^{-1}(B_n)$ is clopen for all $n$, and thus $\pi_{\infty,0}^{-1}(A)$ is open. Finally, Lemma \ref{basecase} grants us a $k$-covering $\tilde{\C}$ of $\T_\infty$ that unravels $\pi_{\infty,0}^{-1}(A)$, and we conclude that $\C_{\infty,0}\circ\tilde{\C}$ is a $k$-covering of $\T$ that unravels $A$.
\end{proof}
From Theorem \ref{maininduct} and Corollary \ref{unraveldet} our main result follows:
\begin{corollary}[Borel Determinacy]
If $A\subseteq[T]$ is Borel, then $G(A;\T)$ is determined.
\end{corollary}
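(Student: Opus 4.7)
The plan is to combine Theorem \ref{maininduct} and Corollary \ref{unraveldet} in a single one-line deduction, since all the substantive work has been packaged into those two statements. Given a Borel payoff set $A \subseteq [T]$, Theorem \ref{maininduct} produces, for any fixed $k \in \N$ (take $k = 0$), a $k$-covering $\langle \Tt, \pi, \phi \rangle$ of $\T$ with the property that $\pi^{-1}(A)$ is clopen in $[\tilde T]$; that is, the covering unravels $A$. By definition every $k$-covering is a covering, so the hypothesis of Corollary \ref{unraveldet} is satisfied, and applying it yields at once that $G(A; \T)$ is determined.

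There is no new content to verify at this stage. The heavy lifting has already been done: the transfinite induction inside Theorem \ref{maininduct} handles the passage through the Borel hierarchy by combining the base case of Lemma \ref{basecase} with the inverse-limit construction of Lemma \ref{invlim}, which is precisely why the proof was formulated in terms of $k$-coverings rather than ordinary coverings. On the other side, Corollary \ref{unraveldet} itself rests on Lemma \ref{liftstrat} (which transports winning strategies from $\Tt$ back down to $\T$ using the lifting property of $\phi$) together with Lemma \ref{levelbylevel} (which imports clopen determinacy from the ordinary, taboo-free setting).

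Consequently the hardest part of this final corollary is essentially notational: to check that the outputs of Theorem \ref{maininduct} are indeed valid inputs for Corollary \ref{unraveldet}, i.e.\ that a $k$-covering unravels $A$ in the sense required, and that Corollary \ref{unraveldet} in fact gives determinacy of $G(A; \T)$ and not merely of some auxiliary game on $\Tt$. Both are immediate from the definitions, so I expect to dispatch this corollary in two or three lines.
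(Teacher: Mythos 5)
Your proposal is correct and matches the paper exactly: the paper likewise derives this corollary immediately by combining Theorem \ref{maininduct} (which supplies a covering unravelling $A$) with Corollary \ref{unraveldet}. Nothing further is needed.
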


It remains to prove the base case of the induction in the proof of Theorem \ref{maininduct}. This is the key technical step in the proof of Borel determinacy, and it is certainly the hardest to motivate. This may be surprising on a first pass, since simply proving open or closed determinacy is fairly elementary. The difficulty arises from the lifting condition, which requires a substantial amount of care to satisfy.

\begin{lemma}\label{basecase}
    If $A\subseteq[T]$ is open or closed and $k\in\N$, then there is a $k$-covering of $\T$ that unravels $A$.
\end{lemma}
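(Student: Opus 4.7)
The plan is to reduce to the closed case using the remark after Corollary~\ref{unraveldet} (any covering that unravels $A$ also unravels $[T]\setminus A$) and then build $\Tt$ by replacing, below each length-$(k+1)$ position $p\conc\langle a\rangle$, the subtree of $T$ by a canonical \emph{winning quasi-strategy} for the closed game $G(A;\T_{p\conc\langle a\rangle})$. By the Gale--Stewart theorem applied to this closed game --- which carries over to the taboo setting via Lemma~\ref{levelbylevel} and elementary clopen determinacy --- one of the two players has a winning quasi-strategy $S_{p,a}$: a subtree of $T_{p\conc\langle a\rangle}$ in which her moves are uniquely dictated, her opponent's moves are unconstrained, and every maximal play is a win for her. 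I would fix one such $S_{p,a}$ canonically for each pair $(p,a)$.

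Next, I would let $\tilde{T}$ agree with $T$ through level $k$, and below each $p\conc\langle a\rangle$ let $\tilde{T}_{p\conc\langle a\rangle}$ consist of $S_{p,a}$ augmented by terminal ``deviation'' positions --- one for each move at a dictated position that leaves $S_{p,a}$ --- with taboo labels assigned so that each deviation is blamed on a specified player. The position map $\pi$ is the inclusion below level $k$ and sends each deviation terminal to its corresponding position of $T$; the strategy map $\phi$ is the identity through level $k$ and, beyond, translates $\tilde\sigma$'s moves through the identification of $\tilde{T}_{p\conc\langle a\rangle}$ with a subtree of $T_{p\conc\langle a\rangle}$. Since $[\tilde{T}_{p\conc\langle a\rangle}]=[S_{p,a}]$ lies either entirely in $A$ or entirely in $[T]\setminus A$ according to the winner at $p\conc\langle a\rangle$, the preimage $\pi^{-1}(A)$ is a disjoint union of basic clopen neighborhoods in $[\tilde{T}]$, hence clopen.

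The hard part will be verifying the lifting condition. Given a play $x\in\lceil T\rceil$ consistent with $\sigma=\phi(\tilde\sigma)$, the natural candidate lift $\tilde x$ follows $x$ into $S_{p,a}$ at level $k+1$ and terminates at the first position where $x$ strays outside $S_{p,a}$; condition~(iii) demands that this terminal be taboo for $\tilde\sigma$'s player. Achieving this forces one to coordinate the taboo labels on deviation terminals, the identity of the dictated player in each $S_{p,a}$, and the definition of $\phi$ so that the only strays ever produced by $\sigma$-consistent play are strays by $\tilde\sigma$'s own player. Because at some $p\conc\langle a\rangle$ the winning player may be the \emph{opponent} of $\tilde\sigma$'s player, the construction must be asymmetric --- $\phi$ treating I-strategies and II-strategies differently --- and verifying the coordination in every combination of cases (the player of $\tilde\sigma$, the winner at $p\conc\langle a\rangle$, and the moving player at level $k$) is the technical heart of the base case and the reason this lemma, despite ultimately reducing to elementary clopen determinacy, is the most delicate step in the proof of Borel determinacy.
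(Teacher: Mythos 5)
There is a genuine gap, and it is not the kind that more careful bookkeeping can close: the plan of pruning each subtree $T_{p\conc\langle a\rangle}$ to a winning quasi-strategy $S_{p,a}$ for the closed game cannot satisfy the lifting condition, no matter how the deviation terminals are labelled. Concretely, suppose II is the winner at $p\conc\langle a\rangle$, so that $S_{p,a}$ dictates II's moves and leaves I's moves free, and let $\tilde{\sigma}$ be \emph{any} strategy for I in $\Tt$. Then $\phi(\tilde{\sigma})$ must be a full strategy for I in $T$, and player II in $T$ is free to play a move not dictated by $S_{p,a}$; the resulting play $x$ is consistent with $\phi(\tilde{\sigma})$ but leaves $\pi(\Tt)$ permanently, so every candidate lift $\tx$ ends at the corresponding deviation terminal and condition~(iii) forces that terminal to be taboo for I. But if II's deviations lead to positions taboo for I, then in $\Tt$ player II wins outright by deviating at the first opportunity, and Lemma~\ref{liftstrat} (applied with arbitrary payoff) would make II win every game on $\T$ --- absurd. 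Labelling the terminal taboo for II instead violates (iii) for I-strategies, and omitting the deviation moves from $\Tt$ altogether leaves $x$ with no lift satisfying $\pi(\tx)\subseteq x$ at all. The obstruction is that the taboo labels are part of the fixed structure of $\Tt$ and cannot depend on which player's strategy is being lifted, whereas your coordination argument would need exactly that dependence. The asymmetry you invoke at the end does not escape this: the problematic deviation is made by the player in $T$ who is \emph{not} the player of $\tilde{\sigma}$, so it cannot be ``blamed'' on $\tilde{\sigma}$'s player by any choice of $\phi$.

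The paper's construction is designed precisely to avoid this trap, and in particular it never invokes Gale--Stewart determinacy of the subgames when building the covering. Instead, player I's $(k+1)$st move in $\Tt$ is $\langle a,X\rangle$, where $X$ is a subset of the set $Z$ of minimal nonterminal positions $q\supsetneq p\conc\langle a\rangle$ with $[\T_q]\cap A=\0$: I \emph{declares} from which of these positions they claim to win, conceding the rest. Player II then either accepts or challenges a position of $X$, and $\Tt$ is truncated exactly at the positions of $Z$, with taboos assigned according to I's declaration. This extra declared information is what lets every truncated play be charged to the correct player during lifting: an I-strategy only reaches a taboo-for-I terminal at a position it chose to put in $Z\setminus X$, and a II-strategy only reaches a taboo-for-II terminal at a position it would never have challenged. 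You would need to rebuild your $\Tt$ along these lines rather than around winning quasi-strategies.
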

\begin{proof}
    We define a $k$-covering $\C:=\langle\Tt,\pi,\phi\rangle$ of $\T$ and show that it unravels $A$. It suffices to assume that $A$ is closed since any cover that unravels $A$ also unravels its complement. Increasing $k$ if necessary, we also assume that $k$ is even.
    
    First, we define $\Tt$. Since $\C$ should be a $k$-covering, we make the first $k$ levels of $\Tt$ identical to those of $\T$, including taboos. As $k$ is even, player I makes the $(k+1)$'th move. Let $p\in\Tt$ with $|p|=k$, so we identify $p$ with a position in $\T$. Position $p$ is terminal (hence taboo) in $\Tt$ if and only if $p$ is terminal $\T$, so we assume that $p$ is not terminal. Let $a$ be a move for I in $\T$ at $p$. Let $Z$ be the set of positions $q\in \T$ such that:
    \begin{enumerate}[label=(\roman*)]
        \item $p^\frown\langle a\rangle\subsetneq q$,
        \item $q$ is not terminal in $\T$,
        \item $[\T_q]\cap A=\0$,
        \item $(\forall r)(p^\frown\langle a\rangle\subsetneq r\subsetneq q\rightarrow[\T_r]\cap A\neq\0)$.
    \end{enumerate}
    That is, $q \in Z$ is a non-terminal extension of $p^\frown\langle a\rangle$ such that I can only win in the game subtree at $q$ by reaching a taboo. Condition (iv) states that $q$ is a minimal such position.
    
    Then in $\Tt$, I plays a move of the form
    $$\langle a,X\rangle,$$
    where $a$ is a move for I in $T$ at $p$, and $X$ is a subset of $Z$ (where $Z$ depends on $a$ as above).
    
    The idea is that by playing this move, I is claiming that for every $q\in X$ they have a winning strategy for $G(A;\T_q)$, and conceding that II can win $G(A;\T_q)$ for $q\in Z\setminus X$ (note that I can only win in such a $\T_q$ by reaching a taboo). Hence I is claiming that the game should be over whenever a position in $Z$ is reached, with I the winner if said position is in $X$ and II the winner otherwise.
    
    If $p^\frown\langle a\rangle$ is taboo in $\T$, we declare $p^\frown\langle\langle a,X\rangle\rangle$ to be taboo for the same player in $\Tt$. Otherwise, $p^\frown\langle\langle a,X\rangle\rangle$ is not taboo in $\Tt$, and player II has two options: (1) \textit{accept} or (2) \textit{challenge} I's claim.
    
    Option (1): If II \textit{accepts the set $X$}, they must play a move of the form
    $$\langle1,b\rangle,$$
    where $b$ is a legal move for II in $\T$ at position $p^\frown\langle a\rangle$. The game now proceeds just as in $\T$ from the position $p^\frown\langle a\rangle^\frown\langle b\rangle$, unless a position in $Z$ is reached. Specifically, let $\tilde{q}:=p^\frown\langle\langle a,X\rangle\rangle^\frown\langle\langle 1,b\rangle\rangle^\frown s$ be a position in $\Tt$, so that $q:=p^\frown\langle a\rangle^\frown\langle b\rangle^\frown s$ is the corresponding position in $\T$. In accordance with our explanation above, if $q\in X$, we let $\tilde{q}$ be taboo for II in $\Tt$, otherwise if $q\in Z\setminus X$, we let $\tilde{q}$ be taboo for I.
    
    Option (2): If II challenges I's claim, they want to show that they can with the game $G(A;\T_r)$ for some $r \in X$. Specifically, they play a move of the form
    $$\langle2,r,b\rangle,$$
    where $r\in X$ and $p^\frown\langle a\rangle^\frown\langle b\rangle\in \T_r$. In this case we say that II has \textit{challenged position $r$}. Then the game is played just as in $\T$, but where all the moves must be played in the game subtree $\T_r$.
    
    We can now define $\pi$ as the map sending each position in $\Tt$ to the corresponding one in $\T$ by forgetting the extra information ($1$, $2$, $X$, and $r$) in moves $k+1$ and $k+2$. It is clear that this satisfies the conditions for a position map.
    
    We check that $\pi^{-1}(A)$ is clopen, as is required for unraveling: Let $\tx$ be an infinite play in $\Tt$. If II accepted, then no subsequence of $\pi(\tx)$ belongs to the associated $Z$. Thus $[T_q]\cap A\neq\0$ for all finite $q\subsetneq\pi(\tx)$, but $A$ is closed, so $\pi(\tx)\in A$. On the other hand, if II challenged, then $\pi(\tx)$ extends some position in $Z$, so $\pi(\tx)\not\in A$. Thus $\pi^{-1}(A)$ is exactly the set of infinite plays in $\Tt$ where II accepts, which is clopen as it is decided after move $k+1$.
    
    It remains to define the strategy map $\phi$ and show that it satisfies the lifting property. We first consider a strategy $\tilde{\sigma}$ for player I in $\Tt$. We define $\phi(\tilde{\sigma})$ by describing a play of $\T$ from the point of view of player I. In doing so we omit the definition of the strategy for positions in $\T$ not consistent with $\phi(\tilde{\sigma})$—these can be assigned arbitrarily as long as they satisfy the conditions of a strategy map.
    
    For the first $k$ moves, the games $\T$ and $\Tt$ are identical so I can just follow the moves dictated by $\tilde{\sigma}$ up to this point. If the game reaches a non terminal position of length $k$, $\tilde{\sigma}$ provides a move $a$ for I and a set of positions $X$, and we dictate that I play the move $a$ in $\T$.
    
    From this point, I plays according to $\tilde{\sigma}$ under the assumption that II has accepted the set $X$. This gives I a move to play unless a position in $p\in Z$ is reached, since then the corresponding position in $\Tt$ is terminal, and $\tilde{\sigma}$ does not give a move. Suppose such a position $p\in Z$ has been reached. If $p\in Z\setminus X$, then I follows an arbitrary strategy in $\T_p$ from this point on, since they are effectively conceding a loss here. On the other hand, if $p\in X$, then I follows $\tilde{\sigma}$ according to the assumption that II challenged position $p$, i.e. that II played the move $\langle2,p,b\rangle$ where $b$ is the $(k+2)$'th move of $p$.

    To verify the lifting condition, let $x$ be a play of $\T$ that is consistent with $\phi(\tilde{\sigma})$ as described above. We will define the lift $\tilde{x}$ of $x$ along $\tilde{\sigma}$. If $x$ does not extend any position in $Z$, we let $\tilde{x}$ be the corresponding play in $\Tt$ with the assumption that II accepted. If $x$ extends a position in $p\in Z\setminus X$, we let $\tilde{x}$ be the (taboo for I) position in $\Tt$ corresponding to $p$, under the assumption that II accepted. Finally, if $x$ extends a position in $p\in X$, we let $\tilde{x}$ be the corresponding play in $\Tt$ under the assumption that II challenged $p$. It is easily seen in each case that $\tilde{x}$ is a valid lift.
    
    Now, we do the same for player II by fixing a strategy $\tilde{\tau} \in \SC_\text{II}(\tilde{T})$ and describing a play in $\T$ from II's perspective. Again, II follows $\tilde{\tau}$ for the first $k$ moves.
    
    Suppose then that a nonterminal position $q^\frown\langle a\rangle$ in $\T$ of length $k+1$ has been reached, i.e. I played $a$ for their $(k+1)$'th move. Let $Z$ be the set of positions as above (recall that this set depends on $q$ and $a$). Let $Y$ be the set of positions $r\in Z$ that are never challenged by $\tilde{\tau}$, meaning that for any $X\subseteq Z$, $\tilde{\tau}$ will never respond to the move $\langle a,X\rangle$ by challenging $r$. Then II plays their $(k+2)$'th move in $\T$ according to $\tilde{\tau}$ under the assumption that I played $\langle a,Y\rangle$ for move $k+1$. Note that by definition of $Y$, $\tilde{\tau}$ will always have II accept here.

    From this point, II continues to play according to $\tilde{\tau}$ under the assumption that I has played the set $Y$. This provides II a strategy, unless a position in $Z$ is reached, for then the corresponding position in $\Tt$ is terminal. Suppose a position $p\in Z$ has been reached. If $p\in Y$, then II follows an arbitrary strategy in $\T_p$ from this point on, as they are effectively conceding a loss. If $p\in Z\setminus Y$, then by definition of $Y$ there is a subset $X\subseteq Z$ such that if I plays $X$, then II rejects $p$. Then II follows $\tilde{\tau}$, but now under the assumption that I played the set $X$, and consequently that II challenged the position $p$.

    Finally, we let $x$ be a play of $\T$, consistent with $\phi(\tilde{\tau})$, and define a lift $\tx$ of $x$ along $\tilde{\tau}$. If $x$ does not extend a position in $Z$, we let $\tilde{x}$ be the corresponding position in $\Tt$ with the assumption that I played the set $Y$. If $x$ extends a position $p\in Y$, we let $\tx$ be the (taboo for II) position in $\Tt$ corresponding to $p$, under the assumption that I played $Y$. Finally, if $x$ extends a position $p\in Z\setminus Y$, we let $\tx$ be the corresponding play in $\Tt$ under the assumption that I played the set $X$ as in the previous paragraph (recall that this $X$ depends on $p$), and so II challenged $p$. Again, it is easily seen that these define valid lifts.
\end{proof}

\section*{Acknowledgements.} This report is the result of an undergraduate research project at McGill University, supervised by Anush Tserunyan. We would like to thank Anush for her exceptional teaching and supervision throughout this project.

\bibliographystyle{plain}
\bibliography{refs}

\end{document}